\documentclass[oneside,reqno,american]{amsart}
\usepackage[T1]{fontenc}
\usepackage[utf8]{inputenc}
\usepackage{babel}
\usepackage{prettyref}
\usepackage{float}
\usepackage{mathrsfs}
\usepackage{mathtools}
\usepackage{amsthm}
\usepackage{amssymb}
\usepackage[pdfusetitle,
 bookmarks=true,bookmarksnumbered=false,bookmarksopen=false,
 breaklinks=false,pdfborder={0 0 1},backref=false,colorlinks=false]
 {hyperref}
\hypersetup{
 colorlinks=true,citecolor=blue,linkcolor=blue,linktocpage=true}

\makeatletter
\numberwithin{equation}{section}
\numberwithin{figure}{section}
\theoremstyle{plain}
\newtheorem{thm}{\protect\theoremname}
\theoremstyle{definition}
\newtheorem{defn}[thm]{\protect\definitionname}
\theoremstyle{plain}
\newtheorem{cor}[thm]{\protect\corollaryname}
\theoremstyle{remark}
\newtheorem{rem}[thm]{\protect\remarkname}

\@ifundefined{date}{}{\date{}}
\usepackage{tikz}
\usetikzlibrary{arrows.meta, positioning}

\newrefformat{cor}{Corollary~\ref{#1}}
\newrefformat{subsec}{Section~\ref{#1}}
\newrefformat{lem}{Lemma~\ref{#1}}
\newrefformat{thm}{Theorem~\ref{#1}}
\newrefformat{sec}{Section~\ref{#1}}
\newrefformat{chap}{Chapter~\ref{#1}}
\newrefformat{prop}{Proposition~\ref{#1}}
\newrefformat{exa}{Example~\ref{#1}}
\newrefformat{tab}{Table~\ref{#1}}
\newrefformat{rem}{Remark~\ref{#1}}
\newrefformat{def}{Definition~\ref{#1}}
\newrefformat{fig}{Figure~\ref{#1}}
\newrefformat{claim}{Claim~\ref{#1}}

\makeatother

\providecommand{\corollaryname}{Corollary}
\providecommand{\definitionname}{Definition}
\providecommand{\remarkname}{Remark}
\providecommand{\theoremname}{Theorem}

\begin{document}
\subjclass[2020]{Primary: 46E22. Secondary: 43A65, 46L05, 46L07, 46L30, 47A20, 47B32, 81P15.}
\title[Factorization of positive definite kernels]{Factorization of positive definite kernels. Correspondences: $C^{*}$-algebraic
and operator valued context vs scalar valued kernels}
\author{Palle E.T. Jorgensen}
\address{(Palle E.T. Jorgensen) Department of Mathematics, The University of
Iowa, Iowa City, IA 52242-1419, U.S.A.}
\email{palle-jorgensen@uiowa.edu}
\author{James Tian}
\address{(James F. Tian) Mathematical Reviews, 535 W William St, Ste 210, Ann
Arbor, MI 48103-4978, U.S.A.}
\email{jft@ams.org}
\begin{abstract}
We introduce and study a class $\mathcal{M}$ of generalized positive
definite kernels of the form $K\colon X\times X\to L(\mathfrak{A},L(H))$,
where $\mathfrak{A}$ is a unital $C^{*}$-algebra and $H$ a Hilbert
space. These kernels encode operator-valued correlations governed
by the algebraic structure of $\mathfrak{A}$, and generalize classical
scalar-valued positive definite kernels, completely positive (CP)
maps, and states on $C^{*}$-algebras. Our approach is based on a
scalar-valued kernel $\tilde{K}\colon(X\times\mathfrak{A}\times H)^{2}\to\mathbb{C}$
associated to $K$, which defines a reproducing kernel Hilbert space
(RKHS) and enables a concrete, representation-theoretic analysis of
the structure of such kernels. We show that every $K\in\mathcal{M}$
admits a Stinespring-type factorization $K(s,t)(a)=V(s)^{*}\pi(a)V(t)$.
In analogy with the Radon--Nikodym theory for CP maps, we characterize
kernel domination $K\leq L$ in terms of a positive operator $A\in\pi_{L}(\mathfrak{A})'$
satisfying $K(s,t)(a)=V_{L}(s)^{*}\pi_{L}(a)AV_{L}(t)$. We further
show that when $\pi_{L}$ is irreducible, domination implies scalar
proportionality, thus recovering the classical correspondence between
pure states and irreducible representations.
\end{abstract}

\keywords{Positive definite functions, kernels, dilation, non-commutative Radon-Nikodym
derivatives, completely positive maps, states, $C^{*}$-algebras.}
\maketitle

\section{Introduction}

The theory of positive definite (p.d.) kernels plays a foundational
role in the analysis of operator-valued functions, dilation theory,
and noncommutative function theory, with deep connections to completely
positive (CP) maps, reproducing kernel Hilbert spaces (RKHSs), and
operator algebras. In the setting where the kernel takes values in
operator spaces over a $C^{*}$-algebra, understanding factorization,
comparison, and structure of such kernels is central to applications
ranging from quantum probability to machine learning with operator-valued
data \cite{MR4295177,MR4561157,MR4760560,MR4787903,MR4867336,MR4887444}.

One of the central tools in this theory is the Stinespring Dilation
Theorem, a structural result that characterizes completely positive
maps $\Phi\colon\mathfrak{A}\to B(H)$ as compressions of $*$-representations:
there exists a Hilbert space $\mathscr{K}$, a $*$-representation
$\pi\colon\mathfrak{A}\to L(\mathscr{K})$, and a bounded linear operator
$V\colon H\to\mathscr{K}$ such that $\Phi(a)=V^{*}\pi(a)V$, for
all $a\in\mathfrak{A}$. This result provides a canonical form for
CP maps, and serves as a foundation for much of modern operator theory
and quantum functional analysis \cite{MR69403,MR102761,MR2336315}.
In quantum information theory, Stinespring's theorem underpins the
mathematical formulation of quantum operations. A general quantum
channel---such as an open-system evolution or noisy measurement process---is
modeled by a completely positive trace-preserving (CPTP) map. Stinespring's
dilation shows that such maps are, in essence, restrictions of unitary
evolutions on a larger space, followed by partial traces over environmental
degrees of freedom. This is essential in understanding decoherence,
quantum noise, and reversible simulation of quantum dynamics, and
forms the core of both theoretical and applied quantum computing.

In this paper, we develop a systematic framework for a class $\mathcal{M}$
of generalized positive definite kernels $K\colon X\times X\to L(\mathfrak{A},L(H))$,
where $\mathfrak{A}$ is a unital $C^{*}$-algebra and $H$ is a Hilbert
space. Our approach is distinguished by its reliance on a scalar-valued
kernel---that is, we embed the kernel structure through scalar-valued
inner products, avoiding the need to construct or analyze vector-valued
RKHSs directly. This scalar-valued machinery yields a more transparent
view of dilation and domination properties, providing new insights
into factorization, comparison, and Radon--Nikodym derivatives of
such kernels.

In fact each $K\in\mathcal{M}$ may be interpreted as a generalized
state on the $C^{*}$-algebra $\mathfrak{A}$. While a classical state
is a positive linear functional $\omega\colon\mathfrak{A}\to\mathbb{C}$
satisfying $\omega(1_{\mathfrak{A}})=1$, the present setting replaces
the scalar target with $L(H)$-valued p.d. kernels. That is, $\Phi\left(a\right)\left(s,t\right)\coloneqq K\left(s,t\right)\left(a\right)$
defines, for each fixed $a$, a p.d. kernel on $X\times X$, taking
values in $L(H)$, satisfying a certain positivity condition, which
generalizes the Gelfand--Naimark--Segal (GNS) and CP constructions. 

The main contributions include a full characterization of kernels
in the class $\mathcal{M}$ via Stinespring-type factorizations, and
a necessary and sufficient condition for kernel domination using positive
operators in the commutant $\pi_{L}(\mathfrak{A})'$, extending results
from \cite{MR4574232,MR4697457,MR4887444}. When the representing
$\mathfrak{A}$-module is minimal and the representation $\pi_{L}$
is irreducible, we show that kernel domination implies scalar proportionality,
paralleling results on states of $C^{*}$-algebras \cite{MR2743416,MR1468230,MR442701,MR512360,MR660825,MR1728123}.

This perspective unifies and extends prior work on operator-valued
Gaussian processes (see, e.g., \cite{MR4787903}), completely positive
semigroups \cite{MR1846747}, and functional models of noncommutative
functions \cite{MR4697457,MR4181326}. In particular, it complements
recent kernel-theoretic dualities in learning theory and dynamical
systems by supplying a precise algebraic and operator-theoretic structure
to the dominance structure of such kernels.

Our results build on and generalize classical dilation and representation
theorems, especially those due to Stinespring \cite{MR102761,MR2366366,MR69403},
Arveson \cite{MR1728123,MR2329688}, and more recent developments
in noncommutative kernel theory \cite{MR4574232,MR2383513,MR2336315}.
They also parallel the lines of research investigating hereditary
kernels and Arveson-type extensions \cite{MR4574232}, and establish
a scalar-valued analytic foundation for further exploration of domination
theory and Radon--Nikodym derivatives in the operator-algebraic context. 

\section{The class $\mathcal{M}$ of generalized p.d. kernels}

For our analysis, it will be convenient that we adopt the physics
convention for Hilbert inner products to be linear in the second variable.
\begin{defn}
\label{def:1}Let $X$ be a set, $H$ a Hilbert space, and $\mathfrak{A}$
a $C^{*}$-algebra with unit $1_{\mathfrak{A}}$. Denote by $\mathcal{M}$
the class of generalized positive definite (p.d.) kernels 
\[
K\colon X\times X\rightarrow L\left(\mathfrak{A},L\left(H\right)\right)
\]
such that 
\begin{equation}
\sum_{i,j=1}^{N}\left\langle u_{i},K\left(s_{i},s_{j}\right)\left(a_{i}^{*}a_{j}\right)u_{j}\right\rangle _{H}\geq0\label{eq:a1}
\end{equation}
for all $\left(u_{i}\right)_{i=1}^{N}\subset H$, $\left(s_{i}\right)_{i=1}^{N}\subset X$,
$\left(a_{i}\right)_{i=1}^{N}\subset\mathfrak{A}$, and all $N\in\mathbb{N}$.
\end{defn}

For each $K$ satisfying \eqref{eq:a1}, there is a naturally associated
scalar-valued p.d. kernel $\tilde{K}\colon\Omega\times\Omega\rightarrow\mathbb{C}$,
where 
\[
\Omega\coloneqq X\times\mathfrak{A}\times H
\]
and 
\begin{equation}
\tilde{K}\left(\left(s,a,u\right),\left(t,b,v\right)\right)\coloneqq\left\langle u,K\left(s,t\right)\left(a^{*}b\right)v\right\rangle _{H}.\label{eq:2}
\end{equation}
Let $H_{\tilde{K}}$ be the corresponding reproducing kernel Hilbert
space (RKHS). Recall that 
\begin{equation}
H_{\tilde{K}}=\overline{span}\left\{ \tilde{K}_{\left(t,b,v\right)}:t\in X,b\in\mathfrak{A},v\in H\right\} \label{eq:a3}
\end{equation}
where 
\begin{equation}
\tilde{K}_{\left(t,b,v\right)}\colon\Omega\rightarrow\mathbb{C},\quad\tilde{K}_{\left(t,b,v\right)}=\left\langle \cdot,K\left(\cdot,t\right)\left(\cdot b\right)v\right\rangle _{H}.\label{eq:a4}
\end{equation}

This construction generalizes the classical theory of reproducing
kernels as developed by Aronszajn \cite{MR24065}, and is further
elaborated in modern treatments such as \cite{MR3526117,MR2975345}.
Kernel methods have since found widespread application in operator
theory, statistics, inverse problems, and machine learning \cite{MR4860620,MR4905634,MR4595463,MR4693231,MR4816645}.
Application of our present setting is illustrated in the following
diagram:

\begin{figure}[H]
\begin{center}
\begin{tikzpicture}[
  every node/.style={font=\small, align=center},
  box/.style={draw, rounded corners, minimum width=3.4cm, minimum height=1.1cm},
  arrow/.style={-Stealth, thick}
]

\node[draw, circle, minimum size=2.1cm] (center) {Generalized\\PD Kernels};

\node[box, above=0.5cm of center] (gp) {Operator-Valued\\Gaussian Processes};
\node[box, right=1cm of center] (quantum) {Quantum Channels \&\\CP Maps};
\node[box, below=0.5cm of center] (ml) {Kernel-Based\\Machine Learning};
\node[box, left=1cm of center] (rkhs) {Reproducing Kernel\\Hilbert Spaces (RKHS)};

\draw[arrow, bend left=20] (gp) to (quantum);
\draw[arrow, bend left=20] (quantum) to (ml);
\draw[arrow, bend left=20] (ml) to (rkhs);
\draw[arrow, bend left=20] (rkhs) to (gp);

\end{tikzpicture}
\end{center}

\caption{Illustrations of related applications of the generalized kernels.}

\end{figure}
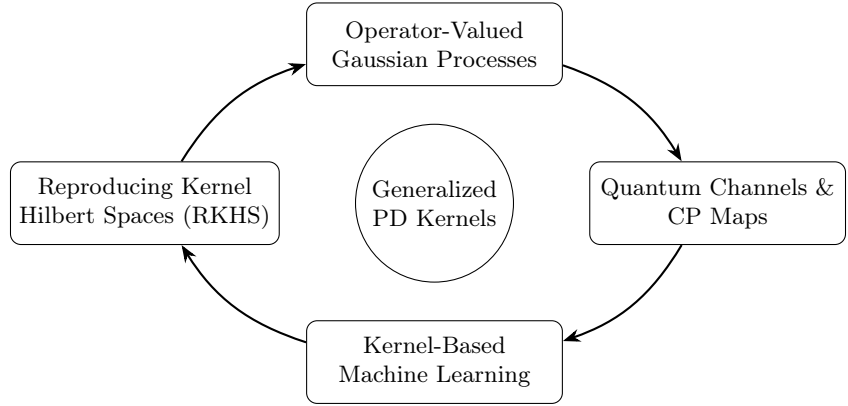

\begin{thm}
A kernel $K$ satisfies \eqref{eq:a1} if and only if it factors as
\[
K\left(s,t\right)\left(a\right)=V\left(s\right)^{*}\pi\left(a\right)V\left(t\right),
\]
for all $s,t\in X$ and $a\in\mathfrak{A}$, where $V\left(t\right)\colon H\rightarrow\mathscr{L}$
is an operator from $H$ into some Hilbert space $\mathscr{L}$, and
$\pi\colon\mathfrak{A}\rightarrow L\left(\mathscr{L}\right)$ is a
representation of $\mathfrak{A}$. 

Moreover, if $\mathscr{L}$ is minimal, i.e., 
\[
\mathscr{L}=\overline{span}\left\{ \pi\left(a\right)V\left(t\right)u:a\in\mathfrak{A},t\in X,u\in H\right\} 
\]
then $\mathscr{L}\simeq H_{\tilde{K}}$, and so $\mathscr{L}$ is
unique up to unitary equivalence.
\end{thm}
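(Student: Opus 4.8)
The plan is to run a Kolmogorov/GNS-type dilation directly inside the scalar RKHS $H_{\tilde{K}}$ attached to $K$ by \eqref{eq:2}. First I would verify that $\tilde{K}$ really is a scalar positive definite kernel on $\Omega$: for scalars $c_{i}$ and points $(s_{i},a_{i},u_{i})$ one has $\sum_{i,j}\overline{c_{i}}c_{j}\tilde{K}\big((s_{i},a_{i},u_{i}),(s_{j},a_{j},u_{j})\big)=\sum_{i,j}\langle c_{i}u_{i},K(s_{i},s_{j})(a_{i}^{*}a_{j})(c_{j}u_{j})\rangle_{H}\geq0$ by \eqref{eq:a1} (using the physics convention, conjugate-linear in the first slot). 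Hence $H_{\tilde{K}}$ exists, with the spanning description \eqref{eq:a3}--\eqref{eq:a4}, and the whole construction will take place in $\mathscr{L}:=H_{\tilde{K}}$.

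Next I would build the representation. For $a\in\mathfrak{A}$ define $\pi(a)$ on the dense span of the kernel sections by $\pi(a)\tilde{K}_{(t,b,v)}:=\tilde{K}_{(t,ab,v)}$. The step I expect to be the \emph{main obstacle} is showing this is well defined and bounded with $\|\pi(a)\|\leq\|a\|$: writing $\|a\|^{2}1_{\mathfrak{A}}-a^{*}a=d^{*}d$ for a suitable $d\in\mathfrak{A}$ and applying \eqref{eq:a1} to the family $\{db_{i}\}$ at the points $\{t_{i}\}$ with vectors $\{v_{i}\}$ gives
\[
\Big\|\sum_{i}\tilde{K}_{(t_{i},ab_{i},v_{i})}\Big\|_{H_{\tilde{K}}}^{2}=\sum_{i,j}\langle v_{i},K(t_{i},t_{j})(b_{i}^{*}a^{*}ab_{j})v_{j}\rangle_{H}\leq\|a\|^{2}\Big\|\sum_{i}\tilde{K}_{(t_{i},b_{i},v_{i})}\Big\|_{H_{\tilde{K}}}^{2},
\]
which simultaneously yields well-definedness (the left side vanishes when the right side does) and the norm bound. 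Then a short computation on spanning vectors using the reproducing property shows that $\pi$ is linear, multiplicative, unital ($\pi(1_{\mathfrak{A}})=I$) and $*$-preserving ($\langle\pi(a^{*})\tilde{K}_{(s,c,u)},\tilde{K}_{(t,b,v)}\rangle=\langle u,K(s,t)(c^{*}ab)v\rangle=\langle\tilde{K}_{(s,c,u)},\pi(a)\tilde{K}_{(t,b,v)}\rangle$), i.e. $\pi$ is a representation of $\mathfrak{A}$ on $H_{\tilde{K}}$.

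Now set $V(t)v:=\tilde{K}_{(t,1_{\mathfrak{A}},v)}$; this is a bounded operator $H\to H_{\tilde{K}}$, since $\|V(t)v\|^{2}=\langle v,K(t,t)(1_{\mathfrak{A}})v\rangle_{H}\leq\|K(t,t)(1_{\mathfrak{A}})\|\,\|v\|^{2}$. Then for all $s,t,a$ and $u,v\in H$,
\[
\langle u,V(s)^{*}\pi(a)V(t)v\rangle_{H}=\langle\tilde{K}_{(s,1_{\mathfrak{A}},u)},\tilde{K}_{(t,a,v)}\rangle_{H_{\tilde{K}}}=\tilde{K}\big((s,1_{\mathfrak{A}},u),(t,a,v)\big)=\langle u,K(s,t)(a)v\rangle_{H},
\]
so $K(s,t)(a)=V(s)^{*}\pi(a)V(t)$. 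Minimality of this model is immediate from \eqref{eq:a3}, because $\pi(b)V(t)v=\tilde{K}_{(t,b,v)}$. For the converse direction, if $K(s,t)(a)=V(s)^{*}\pi(a)V(t)$ for some representation $\pi$ on a Hilbert space and operators $V(t)$, then $\sum_{i,j}\langle u_{i},K(s_{i},s_{j})(a_{i}^{*}a_{j})u_{j}\rangle_{H}=\big\|\sum_{i}\pi(a_{i})V(s_{i})u_{i}\big\|^{2}\geq0$, which is \eqref{eq:a1}.

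Finally, for uniqueness, given two minimal factorizations $(\mathscr{L}_{k},\pi_{k},V_{k})$, $k=1,2$, I would define $W$ on the dense span of $\mathscr{L}_{1}$ by $W\big(\pi_{1}(a)V_{1}(t)u\big):=\pi_{2}(a)V_{2}(t)u$. The identity
\[
\langle\pi_{1}(a)V_{1}(s)u,\pi_{1}(b)V_{1}(t)v\rangle_{\mathscr{L}_{1}}=\langle u,K(s,t)(a^{*}b)v\rangle_{H}=\langle\pi_{2}(a)V_{2}(s)u,\pi_{2}(b)V_{2}(t)v\rangle_{\mathscr{L}_{2}}
\]
shows $W$ is a well-defined isometry with dense range, hence extends to a unitary $W\colon\mathscr{L}_{1}\to\mathscr{L}_{2}$ with $WV_{1}(t)=V_{2}(t)$ and $W\pi_{1}(a)=\pi_{2}(a)W$. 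Specializing one of the triples to the concrete model $\mathscr{L}=H_{\tilde{K}}$ constructed above shows every minimal $\mathscr{L}$ is unitarily equivalent to $H_{\tilde{K}}$, completing the proof.
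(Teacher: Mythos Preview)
Your proof is correct and follows essentially the same route as the paper: both build the dilation inside $H_{\tilde{K}}$ with $V(t)v=\tilde{K}_{(t,1_{\mathfrak{A}},v)}$ and $\pi(a)\tilde{K}_{(t,b,v)}=\tilde{K}_{(t,ab,v)}$, then verify the factorization on spanning vectors. If anything, you are more careful than the paper---you explicitly check positive-definiteness of $\tilde{K}$, prove the well-definedness/boundedness of $\pi$ via the $\|a\|^{2}1_{\mathfrak{A}}-a^{*}a=d^{*}d$ trick, supply the converse direction, and spell out the intertwining unitary for uniqueness, all of which the paper either omits or defers to ``standard arguments.''
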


\begin{proof}
Let $K$, $\tilde{K}$ and $H_{\tilde{K}}$ be as above. For each
$t\in X$, let $V\left(t\right)\colon H\rightarrow H_{\tilde{K}}$
be the operator 
\[
v\mapsto V\left(t\right)v=\tilde{K}_{\left(t,1_{\mathfrak{A}},v\right)}.
\]
By \eqref{eq:a4}, $V\left(t\right)$ is linear and 
\begin{eqnarray*}
\left\Vert V\left(t\right)v\right\Vert _{H}^{2} & = & \left\langle \tilde{K}_{\left(t,1_{\mathfrak{A}},v\right)},\tilde{K}_{\left(t,1_{\mathfrak{A}},v\right)}\right\rangle _{H}\\
 & \underset{\left(\ref{eq:2}\right)}{=} & \left\langle v,K\left(t,t\right)\left(1_{\mathfrak{A}}\right)v\right\rangle _{H}<\infty.
\end{eqnarray*}

Since 
\begin{eqnarray*}
\left\langle V\left(t\right)v,\tilde{K}_{\left(s,a,u\right)}\right\rangle _{H_{\tilde{K}}} & = & \left\langle \tilde{K}_{\left(t,1_{\mathfrak{A}},v\right)},\tilde{K}_{\left(s,a,u\right)}\right\rangle _{H_{\tilde{K}}}\\
 & = & \left\langle v,K\left(t,s\right)\left(a\right)u\right\rangle _{H}
\end{eqnarray*}
and $\left\{ \pi\left(a\right)V\left(t\right)u:a\in\mathfrak{A},t\in X,u\in H\right\} $
is dense in $H_{\tilde{K}}$ (by definition \eqref{eq:a3}), it follows
that 
\begin{equation}
V\left(t\right)^{*}\tilde{K}_{\left(s,a,u\right)}=K\left(t,s\right)\left(a\right)u\label{eq:4}
\end{equation}
for all $s,t\in X$, $a\in\mathfrak{A}$, $u\in H$.

Let $\pi\colon\mathfrak{A}\rightarrow H_{\tilde{K}}$ be the map 
\begin{equation}
\pi\left(a\right)\tilde{K}_{\left(t,b,v\right)}=\tilde{K}_{\left(t,ab,v\right)},\label{eq:5}
\end{equation}
for all $t\in X$, $a,b\in\mathfrak{A}$, $v\in H$, extended to all
$H_{\tilde{K}}$. It is clear that $\pi$ is a representation of $\mathfrak{A}$
in $H_{\tilde{K}}$. Indeed, 
\begin{align*}
\left\langle \tilde{K}_{\left(s,c,u\right)},\pi\left(a\right)\tilde{K}_{\left(t,b,v\right)}\right\rangle _{H_{\tilde{K}}} & =\left\langle u,K\left(s,t\right)\left(c^{*}ab\right)v\right\rangle _{H}\\
 & =\left\langle u,K\left(s,t\right)\left(\left(a^{*}c\right)^{*}b\right)v\right\rangle _{H}\\
 & =\left\langle \tilde{K}_{\left(s,a^{*}c,u\right)},\tilde{K}_{\left(t,b,v\right)}\right\rangle _{H_{\tilde{K}}}\\
 & =\left\langle \pi\left(a^{*}\right)\tilde{K}_{\left(s,c,u\right)},\tilde{K}_{\left(t,b,v\right)}\right\rangle _{H_{\tilde{K}}}
\end{align*}
so that $\pi\left(a\right)^{*}=\pi\left(a^{*}\right)$. 

Finally, one checks that 
\begin{eqnarray*}
V\left(s\right)^{*}\pi\left(a\right)V\left(t\right)u & = & V\left(s\right)^{*}\pi\left(a\right)\tilde{K}_{\left(t,1_{\mathfrak{A}},v\right)}\\
 & \underset{\left(\ref{eq:5}\right)}{=} & V\left(s\right)^{*}\tilde{K}_{\left(t,a,v\right)}\\
 & \underset{\left(\ref{eq:4}\right)}{=} & K\left(s,t\right)\left(a\right)v.
\end{eqnarray*}

The rest of the theorem follows from standard arguments in the literature,
and the details are omitted here. 
\end{proof}

\section{A partial order on operator-kernel spaces, and its use in an analysis
of families of non-commuting kernels}

The set $\mathcal{M}$ carries a natural partial order: 
\begin{defn}
For two kernels $K,L$ in $\mathcal{M}$, it is said that $L$ dominates
$K$, or $K\leq L$, if $L-K\in\mathcal{M}$. 
\end{defn}

\begin{thm}
\label{thm:4}Let $K,L\in\mathcal{M}$, such that
\begin{align*}
K\left(s,t\right)\left(a\right) & =V_{K}^{*}\left(s\right)\pi_{K}\left(a\right)V_{K}\left(t\right)\\
L\left(s,t\right)\left(a\right) & =V_{L}^{*}\left(s\right)\pi_{L}\left(a\right)V_{L}\left(t\right).
\end{align*}
Then $K\leq L$ if and only if there is a positive operator $A\colon H_{\tilde{L}}\rightarrow H_{\tilde{L}}$,
$0\leq A\leq I_{H_{\tilde{L}}}$, such that $A\in\pi_{L}\left(\mathfrak{A}\right)'$,
and 
\begin{equation}
K\left(s,t\right)\left(a\right)=V_{L}^{*}\left(s\right)\pi_{L}\left(a\right)AV_{L}\left(t\right).\label{eq:7}
\end{equation}
\end{thm}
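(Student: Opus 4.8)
The plan is to transfer the whole question to the scalar kernel $\tilde L$ and its RKHS $H_{\tilde L}$, in which the operators $V_L(t)$ and $\pi_L$ are realized concretely as in the preceding factorization theorem. Since \eqref{eq:2} is linear in the kernel, $\widetilde{L-K}=\tilde L-\tilde K$, so $K\le L$ means precisely that $\tilde L-\tilde K$ is a positive definite kernel on $\Omega$. I would work throughout with the minimal Stinespring data, so that $V_L(t)v=\tilde L_{(t,1_{\mathfrak A},v)}$, $\pi_L(a)\tilde L_{(t,b,v)}=\tilde L_{(t,ab,v)}$, and $V_L(s)^*\tilde L_{(t,a,u)}=L(s,t)(a)u$ hold on the dense span of the $\tilde L_{(t,b,v)}$; only then does the assertion ``$A\in\pi_L(\mathfrak A)'$'' with $A$ acting on $H_{\tilde L}$ carry content. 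I would also record the symmetry $K(t,s)(a^*)=K(s,t)(a)^*$, immediate from any factorization of $K$.

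For the \emph{if} implication, suppose $0\le A\le I_{H_{\tilde L}}$ with $A\in\pi_L(\mathfrak A)'$ and \eqref{eq:7}. Then $C\coloneqq(I-A)^{1/2}\in\pi_L(\mathfrak A)'$ as well (a norm-limit of polynomials in $I-A$), and subtracting \eqref{eq:7} from the factorization of $L$ gives
\[
(L-K)(s,t)(a^*b)=V_L(s)^*\pi_L(a)^*\pi_L(b)(I-A)V_L(t)=\bigl(C\pi_L(a)V_L(s)\bigr)^*\bigl(C\pi_L(b)V_L(t)\bigr).
\]
Substituting this into the left-hand side of \eqref{eq:a1} for $L-K$ rewrites it as $\bigl\|\sum_i C\pi_L(a_i)V_L(s_i)u_i\bigr\|_{H_{\tilde L}}^2\ge0$, so $L-K\in\mathcal M$, i.e. $K\le L$.

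For the \emph{only if} implication, assume $K\le L$, so $\tilde L-\tilde K\ge0$. The comparison theorem for reproducing kernels \cite{MR24065} then yields a contractive inclusion $\iota\colon H_{\tilde K}\hookrightarrow H_{\tilde L}$ with $\iota^*\tilde L_\omega=\tilde K_\omega$; I set $A\coloneqq\iota\iota^*$, a positive contraction on $H_{\tilde L}$ satisfying $A\tilde L_\omega=\tilde K_\omega$ and hence $\langle A\tilde L_{\omega_1},\tilde L_{\omega_2}\rangle_{H_{\tilde L}}=\tilde K(\omega_1,\omega_2)$ for all $\omega_i\in\Omega$. To obtain $A\in\pi_L(\mathfrak A)'$ I would compute the matrix coefficients of $A\pi_L(a)$ and of $\pi_L(a)A$ between elements $\tilde L_{(s,c,u)}$ and $\tilde L_{(t,b,v)}$: using $\pi_L(a)^*=\pi_L(a^*)$, the defining relation for $\pi_L$, and the last displayed identity, both $\langle\tilde L_{(s,c,u)},A\pi_L(a)\tilde L_{(t,b,v)}\rangle$ and $\langle\tilde L_{(s,c,u)},\pi_L(a)A\tilde L_{(t,b,v)}\rangle$ collapse to $\langle u,K(s,t)(c^*ab)v\rangle_H$, giving $A\pi_L(a)=\pi_L(a)A$ on a dense domain. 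Finally \eqref{eq:7} follows by combining the commutation relation with $\pi_L(a)V_L(t)u=\tilde L_{(t,a,u)}$ and $V_L(s)w=\tilde L_{(s,1_{\mathfrak A},w)}$: pairing $V_L(s)^*\pi_L(a)AV_L(t)u=V_L(s)^*A\tilde L_{(t,a,u)}$ with an arbitrary $w\in H$ and using $K(t,s)(a^*)=K(s,t)(a)^*$ produces $\langle K(s,t)(a)u,w\rangle_H$, which is \eqref{eq:7}.

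I expect the one genuinely delicate point to be the commutation $A\in\pi_L(\mathfrak A)'$: this is where the multiplicative structure of $\mathfrak A$ (entering $\tilde K$ through the product $a^*b$) and the intertwining property of $\pi_L$ must mesh, and where the physics convention — conjugate-linearity in the first slot — has to be tracked carefully through each inner-product manipulation. The remainder is routine bookkeeping with the minimal Stinespring data.
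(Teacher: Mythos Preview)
Your argument is correct and follows the same strategy as the paper: pass to the scalar kernels $\tilde K,\tilde L$, invoke Aronszajn's comparison theorem to produce a positive contraction $T$ on $H_{\tilde L}$ sending $\tilde L_\omega$ to $\tilde K_\omega$, and check commutation with $\pi_L(\mathfrak A)$ on the dense span of the $\tilde L_{(t,b,v)}$. Two points of difference are worth recording. First, you supply the ``if'' direction explicitly via the square root $C=(I-A)^{1/2}\in\pi_L(\mathfrak A)'$; the paper's proof omits this direction entirely. Second, your identification $A=\iota\iota^*$ (i.e., $A=T$) is the correct one: the paper's final display sets $A=T^2$, obtained from the line $(\pi_K(a)V_K(s))^*(\pi_K(b)V_K(t))=(T\pi_L(a)V_L(s))^*(T\pi_L(b)V_L(t))$, but the left side is an adjoint taken in $H_{\tilde K}$ while the right side is taken in $H_{\tilde L}$, and these do not coincide unless the inclusion is isometric. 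Your matrix-coefficient verification, which uses only the reproducing property $\langle\tilde L_{\omega_1},A\tilde L_{\omega_2}\rangle_{H_{\tilde L}}=\tilde K(\omega_1,\omega_2)$, avoids this slip and yields \eqref{eq:7} directly.
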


\begin{proof}
The identify \eqref{eq:7} follows from a basic fact for scalar-valued
kernels (see, e.g., \cite{MR0051437,MR24065}). Recall that for $K,L:X\times X\rightarrow\mathbb{C}$
p.d., $K\leq L\Longleftrightarrow L-K\geq0$, which is equivalent
to $H_{K}\subset H_{L}$, where the containment of the respective
RKHSs is contractive. In particular, the map $T\colon L\left(\cdot,t\right)\mapsto K\left(\cdot,t\right)\in H_{K}\subset H_{L}$
extends by density to a positive operator on $H_{L}$, with $0\leq T\leq I_{H_{L}}$. 

Now apply this to $\tilde{K}$ and $\tilde{L}$: By assumption, $\tilde{K}\leq\tilde{L}$
as scalar-valued kernels. Therefore, the map
\[
T:\tilde{L}_{\left(t,a,v\right)}\mapsto\tilde{K}_{\left(t,a,v\right)}\in H_{\tilde{K}}\subset H_{\tilde{L}}
\]
extends to a positive operator in $H_{\tilde{L}}$ with $0\leq T\leq I_{H_{\tilde{L}}}$.
Note that $\pi_{K}=\pi_{L}\big|_{H_{\tilde{K}}}$, and so 
\begin{align*}
T\pi_{L}\left(a\right)V_{L}\left(t\right)v & =T\tilde{L}_{\left(t,a,v\right)}=\tilde{K}_{\left(t,a,v\right)}\\
 & =\pi_{K}\left(a\right)V_{K}\left(t\right)v=\pi_{L}\left(a\right)TV_{L}\left(t\right)v.
\end{align*}
This implies that $T\in\pi_{L}\left(\mathfrak{A}\right)'$. Then,
\begin{align*}
K\left(s,t\right)\left(a,b\right) & =\left(\pi_{K}\left(a\right)V_{K}\left(s\right)\right)^{*}\left(\pi_{K}\left(b\right)V_{K}\left(t\right)\right)\\
 & =\left(T\pi_{L}\left(a\right)V_{L}\left(t\right)\right)^{*}\left(T\pi_{L}\left(b\right)V_{L}\left(t\right)\right)\\
 & =V_{L}\left(t\right)^{*}T^{2}\pi\left(a^{*}b\right)V_{L}\left(t\right),
\end{align*}
with $A\coloneqq T^{2}$, and this is \eqref{eq:7}. 
\end{proof}
\begin{defn}
In the case of \eqref{eq:7}, $A$ is said to be the Radon-Nikodym
derivative of $K$ with respect to $L$, denoted by 
\[
A=dK/dL.
\]
 
\end{defn}

\begin{cor}
The following are equivalent: 
\begin{enumerate}
\item $\pi_{L}$ is irreducible.
\item $K\leq L$ $\Longleftrightarrow$ $K=\lambda L$ for some constant
$\lambda\in\mathbb{C}$. 
\item $dK/dL=\lambda I_{H_{\tilde{L}}}$ for some constant $\lambda\in\mathbb{C}$. 
\end{enumerate}
\end{cor}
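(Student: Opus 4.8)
The plan is to route all three equivalences through Schur's lemma, which for a $*$-representation of a $C^{*}$-algebra identifies irreducibility of $\pi_{L}$ with triviality of the commutant, $\pi_{L}(\mathfrak{A})'=\mathbb{C}\,I_{H_{\tilde{L}}}$. Throughout I use the minimal realization $\mathscr{L}_{L}=H_{\tilde{L}}$ of $L$ supplied by the factorization theorem above, so that $H_{\tilde{L}}=\overline{span}\{\pi_{L}(a)V_{L}(t)u:a\in\mathfrak{A},\,t\in X,\,u\in H\}$; this is the setting already presupposed in \prettyref{thm:4}. The one auxiliary fact I will use repeatedly is the following consequence of minimality: if $B\in\pi_{L}(\mathfrak{A})'$ and $V_{L}^{*}(s)\pi_{L}(a)BV_{L}(t)=0$ for all $s,t\in X$ and $a\in\mathfrak{A}$, then $B=0$; indeed, unfolding the inner product and using $B\in\pi_{L}(\mathfrak{A})'$ together with $\pi_{L}(b)^{*}=\pi_{L}(b^{*})$ gives $\langle\pi_{L}(b)V_{L}(s)u,\,B\pi_{L}(a)V_{L}(t)v\rangle_{H_{\tilde{L}}}=\langle u,\,V_{L}^{*}(s)\pi_{L}(b^{*}a)BV_{L}(t)v\rangle_{H}=0$, and the two families of vectors that appear are each total in $H_{\tilde{L}}$.

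I prove $(1)\Rightarrow(3)\Rightarrow(2)\Rightarrow(1)$. For $(1)\Rightarrow(3)$: if $K\leq L$, then \prettyref{thm:4} produces $A=dK/dL\in\pi_{L}(\mathfrak{A})'$ with $0\leq A\leq I_{H_{\tilde{L}}}$, and Schur's lemma forces $A=\lambda I_{H_{\tilde{L}}}$ with $\lambda\in[0,1]$. For $(3)\Rightarrow(2)$: if $K\leq L$, substituting $dK/dL=\lambda I_{H_{\tilde{L}}}$ into \eqref{eq:7} gives $K(s,t)(a)=\lambda V_{L}^{*}(s)\pi_{L}(a)V_{L}(t)=\lambda L(s,t)(a)$, i.e.\ $K=\lambda L$; conversely $\lambda L\leq L$ for $0\leq\lambda\leq1$ because $L-\lambda L=(1-\lambda)L\in\mathcal{M}$.

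The implication $(2)\Rightarrow(1)$ is the step that requires care, and I argue by contraposition. If $\pi_{L}$ is reducible, choose a nonzero closed invariant subspace $M\subsetneq H_{\tilde{L}}$; since $\pi_{L}(a)^{*}=\pi_{L}(a^{*})$, the subspace $M^{\perp}$ is invariant as well, so the orthogonal projection $P=P_{M}$ lies in $\pi_{L}(\mathfrak{A})'$ with $0\neq P\neq I_{H_{\tilde{L}}}$. Taking $A:=P$ in the sufficiency direction of \prettyref{thm:4}, the kernel $K(s,t)(a):=V_{L}^{*}(s)\pi_{L}(a)PV_{L}(t)$ belongs to $\mathcal{M}$ and satisfies $K\leq L$. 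If we had $K=\lambda L$ for some scalar $\lambda$, then $V_{L}^{*}(s)\pi_{L}(a)(P-\lambda I_{H_{\tilde{L}}})V_{L}(t)=0$ for all $s,t,a$, and since $P-\lambda I_{H_{\tilde{L}}}\in\pi_{L}(\mathfrak{A})'$ the auxiliary fact above yields $P=\lambda I_{H_{\tilde{L}}}$ — impossible for a nontrivial projection. Hence $(2)$ fails. (The same kernel $K$, for which $dK/dL=P$, simultaneously contradicts $(3)$, giving $\neg(1)\Rightarrow\neg(3)$ directly.)

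The main obstacle is thus confined to the last paragraph, and it is conceptual rather than computational: one must notice that a reducing subspace supplies a genuinely non-scalar positive contraction $P\in\pi_{L}(\mathfrak{A})'$, invoke the sufficiency half of \prettyref{thm:4} to turn $P$ into a kernel dominated by $L$, and then use minimality of $H_{\tilde{L}}$ (through the auxiliary fact, which applies minimality on both slots of the inner product) to transfer the relation $K=\lambda L$ back into $P=\lambda I_{H_{\tilde{L}}}$. Everything else is a direct reading of \prettyref{thm:4} together with Schur's lemma.
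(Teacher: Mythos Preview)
Your argument is correct, and it is precisely the intended one: the paper states this corollary without proof, presenting it as an immediate consequence of \prettyref{thm:4} together with Schur's lemma for $*$-representations. Your write-up simply supplies those details---including the minimality argument needed to recover $P=\lambda I_{H_{\tilde{L}}}$ from $K=\lambda L$ in the $(2)\Rightarrow(1)$ direction---so there is no alternative approach in the paper to compare against.
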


\begin{rem}
This corollary closely mirrors the classical correspondence in the
Gelfand--Naimark--Segal (GNS) construction, where pure states on
a $C^{*}$-algebra correspond to irreducible representations. In our
setting, the map
\[
\Phi_{K}\left(a\right):X\times X\rightarrow L\left(H\right),\quad\Phi_{K}\left(a\right)\coloneqq K\left(s,t\right)\left(a\right)
\]
where $K\in\mathcal{M}$, generalize classical states by taking values
not in scalars, but in $L(H)$-valued positive definite kernels. 

Historically, this conceptual shift dates back to the early 1950s,
when Segal asked Stinespring to extend the GNS construction. Stinespring's
answer was his famous dilation theorem, which replaced scalar-valued
states with completely positive (CP) maps into operator algebras.
In this way, the passage from classical states to operator-valued
kernels such as $K$ may be viewed as a continuation of this line
of thought---trading scalar positivity for operator positivity, and
classical irreducibility for that of the associated representation
$\pi_{K}$. In other words, $\Phi_{K}$ is a pure state (in the generalized
sense) if and only if $\pi_{K}$ is irreducible. 

This analogy extends further. Every state on a unital $C^{*}$-algebra
is a Choquet integral applied to the pure states. The non-pure states
are called mixed states. In the context of quantum mechanics, pure
states represent a system with complete knowledge, while mixed states
represent a system with some degree of uncertainty or incomplete information.
The Choquet boundary captures the idea that pure states are the extreme
points of the convex set of all possible quantum states; see, e.g.,
\cite{MR1151615,MR660825,MR149258,MR729530}. In our generalized setting,
irreducibility of the associated representation $\pi_{K}$ plays the
role of extremality, imitating the structure of classical state space
decompositions.

For the benefit of readers, we include the following relevant citations
on operator algebras, \cite{MR1276162,MR512360,MR442701,MR1468230}.
\end{rem}

\begin{cor}[Quantum Channel Simulation]
\label{cor:8}Let $K,L\in\mathcal{M}$ be positive definite kernels
associated to quantum systems, with factorizations:
\[
K(s,t)(a)=V_{K}(s)^{*}\pi_{K}(a)V_{K}(t),\quad L(s,t)(a)=V_{L}(s)^{*}\pi_{L}(a)V_{L}(t),
\]
where $\pi_{K},\pi_{L}$ are $*$-representations of a $C^{*}$-algebra
$\mathfrak{A}$, and the factorizations are minimal.

Suppose $K\leq L$, i.e., the kernel $K$ is dominated by $L$ via:
\[
K(s,t)(a)=V_{L}(s)^{*}\pi_{L}(a)AV_{L}(t),
\]
for some $0\leq A\leq I_{H_{\tilde{L}}}$ in $\pi_{L}(\mathfrak{A})'$.

Then there exists a completely positive trace-nonincreasing map (i.e.,
a quantum operation)
\[
\Phi:\pi_{L}(\mathfrak{A})\to\pi_{L}(\mathfrak{A})
\]
such that
\[
K(s,t)(a)=V_{L}(s)^{*}\Phi(\pi_{L}(a))V_{L}(t),
\]
where $\Phi$ is of the form:
\[
\Phi(T)=A^{1/2}TA^{1/2},\quad T\in\pi_{L}(\mathfrak{A}),
\]
which corresponds to post-processing by a quantum effect (POVM element)
$A$.
\end{cor}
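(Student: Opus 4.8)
The plan is to assemble Corollary~\ref{cor:8} directly from the data already provided by Theorem~\ref{thm:4}, treating it essentially as a reinterpretation rather than a new computation. The hypotheses hand us a positive operator $A$ with $0\leq A\leq I_{H_{\tilde{L}}}$ and $A\in\pi_{L}(\mathfrak{A})'$, together with the factorization $K(s,t)(a)=V_{L}(s)^{*}\pi_{L}(a)AV_{L}(t)$. First I would define $\Phi\colon\pi_{L}(\mathfrak{A})\to L(H_{\tilde{L}})$ by $\Phi(T)=A^{1/2}TA^{1/2}$; here $A^{1/2}$ is the unique positive square root of $A$, which exists by the continuous functional calculus and again lies in $\pi_{L}(\mathfrak{A})'$ (since $A$ does, and the square root is a norm-limit of polynomials in $A$). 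This is manifestly linear, and it is completely positive because it is a single-Kraus-operator map: for any $n$ and any positive $[T_{ij}]\in M_n(\pi_{L}(\mathfrak{A}))$ one has $[\Phi(T_{ij})]=\mathrm{diag}(A^{1/2})\,[T_{ij}]\,\mathrm{diag}(A^{1/2})\geq0$.

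Next I would verify that $\Phi$ actually maps $\pi_{L}(\mathfrak{A})$ into itself, as the statement asserts. This is where the commutant hypothesis is used a second time: for $T=\pi_{L}(b)$ we get $\Phi(\pi_{L}(b))=A^{1/2}\pi_{L}(b)A^{1/2}=\pi_{L}(b)A$, using $A^{1/2}\pi_{L}(b)=\pi_{L}(b)A^{1/2}$ and $A^{1/2}A^{1/2}=A$. Strictly speaking $\pi_{L}(b)A$ need not lie in the $*$-subalgebra $\pi_{L}(\mathfrak{A})$ unless $A$ does; the cleanest reading, which I would adopt, is that $\Phi$ is defined on the von Neumann algebra $\pi_{L}(\mathfrak{A})''$ (or on $L(H_{\tilde{L}})$ and then restricted), where the inclusion is automatic. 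I would add a parenthetical clarifying that the target should be understood as the generated von Neumann algebra, or simply as $L(H_{\tilde{L}})$, so that the claim is literally correct; alternatively one notes $\Phi(\pi_L(a))=\pi_L(a)A\in\pi_L(\mathfrak{A})''=\pi_L(\mathfrak{A})'''$, etc.

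Then the key identity follows by substitution: $V_{L}(s)^{*}\Phi(\pi_{L}(a))V_{L}(t)=V_{L}(s)^{*}A^{1/2}\pi_{L}(a)A^{1/2}V_{L}(t)=V_{L}(s)^{*}\pi_{L}(a)AV_{L}(t)=K(s,t)(a)$, where the middle step again moves one copy of $A^{1/2}$ past $\pi_{L}(a)$ using the commutant relation, and the last step is exactly \eqref{eq:7} from Theorem~\ref{thm:4}. Finally I would record trace-nonincreasingness: for positive trace-class $T$, $\mathrm{Tr}(\Phi(T))=\mathrm{Tr}(A^{1/2}TA^{1/2})=\mathrm{Tr}(A T)\leq\mathrm{Tr}(T)$ since $0\leq A\leq I$, so $\Phi$ is a quantum operation (CP trace-nonincreasing), and it is precisely the post-selection map associated with the quantum effect $A$. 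The physical sentence about POVM elements is interpretive and needs no proof.

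The main obstacle is not analytic but bookkeeping: making the codomain statement $\Phi\colon\pi_{L}(\mathfrak{A})\to\pi_{L}(\mathfrak{A})$ literally true. Since $A$ is only guaranteed to lie in the commutant, not in $\pi_{L}(\mathfrak{A})$ itself, the safe move is to phrase $\Phi$ as acting on $L(H_{\tilde{L}})$ (or on $\pi_{L}(\mathfrak{A})''$) and observe that on the image $\{\pi_{L}(a)\}$ it takes the form $\pi_{L}(a)\mapsto\pi_{L}(a)A$; everything else in the corollary — complete positivity, the factorization identity, trace-nonincreasingness — is then immediate from Theorem~\ref{thm:4} and elementary functional calculus, so I would keep the write-up short and flag the codomain subtlety in a single remark.
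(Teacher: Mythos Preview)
Your proposal is correct and matches the paper's approach: the paper itself gives no detailed argument, simply stating that the corollary ``follows immediate from \prettyref{thm:4},'' which is exactly what you do by reading off $\Phi(T)=A^{1/2}TA^{1/2}$ from the Radon--Nikodym data. Your write-up is in fact more careful than the paper's, and your flag about the codomain---that $\Phi(\pi_L(a))=\pi_L(a)A$ lands in $\pi_L(\mathfrak{A})''$ rather than $\pi_L(\mathfrak{A})$ in general---is a genuine imprecision in the statement that the paper does not address.
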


\begin{rem}
The proof of \prettyref{cor:8} follows immediate from \prettyref{thm:4}.
There is a rich literature on positive operator valued measures (POVM).
Of special relevance is \cite{MR4862902,MR4557727,MR4555558,MR4760560}.

In the language of quantum information, the kernel $K$ may be viewed
as modeling a quantum process---such as a generalized quantum channel---that
can be simulated or degraded from the process described by $L$. The
operator $A\in\pi_{L}(\mathfrak{A})'$, which mediates the dominance
relation $K\leq L$, plays the role of a quantum effect or filter
in the Heisenberg picture, transforming observables via a completely
positive trace-nonincreasing map. This structure mirrors the notion
of quantum sufficiency, wherein one statistical experiment (or quantum
measurement model) is derivable from another via post-processing.
Notably, if $\pi_{L}$ is irreducible, then $A$ must lie in $\mathbb{C}\cdot I$,
implying that $K\leq L$ entails a scalar attenuation of the kernel---resembling
a fidelity degradation between quantum operations.

More precisely, the dependence of $V_{L}(t)\colon H\to\mathscr{L}$
on the parameter $t\in X$ reflects a family of quantum states or
embeddings labeled by classical data. Each $V_{L}(t)$ can be interpreted
as preparing the system in a quantum state associated with input $t$.
The kernel expression
\[
K(s,t)(a)=V_{L}(s)^{*}\,\pi_{L}(a)\,A\,V_{L}(t)
\]
then represents a generalized transition amplitude: prepare the system
at $t$, apply observable $\pi_{L}(a)$, attenuate with $A$, and
compare with the preparation at $s$. The operator $A$ acts as a
quantum effect or post-processing filter. This structure appears in
quantum learning and quantum instruments, where classical inputs are
encoded into quantum states and kernels track correlations or similarities
between them through observables. The variation in $V_{L}(t)$ encodes
how classical parameters affect quantum behavior.

The non-commutative framework forming the setting for our present
factorization results is both natural; and it is also motivated by
questions that are central in the context of quantum information,
e.g., in consideration of open quantum systems. Indeed, in the recent
literature in quantum information, there is a variety of such new
developments. Full details will be beyond the scope of our present
paper, and so for our present purpose, we limit ourselves to the following
citations to the literature \cite{MR4760335,MR4619365,MR3390587,MR4719025,MR4230450}.
\end{rem}

\begin{rem}
One may replace the $C^{*}$-algebra $\mathfrak{A}$ in \prettyref{def:1}
by a group algebra $\mathbb{C}[G]$, then all essential ingredients
from the above general $C^{*}$-algebraic framework carry over: positivity,
sesquilinearity, factorization and minimality are preserved. Note
that the positive-definiteness assumption \eqref{eq:a1} becomes:
\[
\sum_{i,j=1}^{N}\left\langle u_{i},K\left(s_{i},s_{j}\right)\left(g_{i}^{-1}g_{j}\right)u_{j}\right\rangle _{H}\geq0
\]
for all $\left(u_{i}\right)_{i=1}^{N}\subset H,\left(s_{i}\right)_{i=1}^{N}\subset X,\left(g_{i}\right)_{i=1}^{N}\subset G$,
and all $N\in\mathbb{N}.$ 

The group setting provides a concrete and symmetric structure that
is particularly natural in harmonic analysis, representation theory,
and quantum information, where groups often encode symmetries, conserved
quantities, or control operations. The kernel factorization thus becomes
a bridge between abstract operator algebra methods and classical constructions
involving group actions and unitary dynamics.
\end{rem}

\bibliographystyle{amsalpha}
\bibliography{ref}

\end{document}